\documentclass[12pt]{article}
\usepackage{amssymb,amsmath}
\usepackage{graphicx,float,color}
\usepackage[encapsulated]{CJK}

%%% PAGE DIMENSIONS
\usepackage{geometry} % to change the page dimensions
\geometry{a4paper} % or letterpaper (US) or a5paper or....
\textwidth=16cm \textheight=22.5cm \topmargin=-0.3cm

%%% PACKAGES
\usepackage{amsthm}
\usepackage{array} % for better arrays (eg matrices) in maths
\usepackage{paralist} % very flexible & customisable lists (eg. enumerate/itemize, etc.)
\usepackage{verbatim} % adds environment for commenting out blocks of text & for better verbatim
\usepackage{subfig} % make it possible to include more than one captioned figure/table in a single float
\usepackage{amsmath}%%%AMS SYMBOLS
\usepackage[all]{xy}
\newtheorem{theorem}{Theorem}[section]\newtheorem {lemma}{Lemma}[section]

%%% SECTION TITLE APPEARANCE
\usepackage{sectsty}
\allsectionsfont{\sffamily\mdseries\upshape} % (See the fntguide.pdf for font help)
% (This matches ConTeXt defaults)

%%% END Article customizations
\title{ Hausdorff dimension of the sets of Li-Yorke pairs for some chaotic dynamical systems including $A$-coupled expanding systems}
\author{ Hyonhui Ju$^1$, Jinhyon Kim$^1$, Peter Raith$^2$}
\date{} 

 \begin{document}
\maketitle

\centerline{\small $^1$ Department of Mathematics, 
Kim Il Sung University, Pyongyang, D.P.R.Korea}
\centerline{\small $^2$ Department of Mathematics, University of Vienna, Austria}

\begin{abstract}
 In this paper we consider Hausdorff dimension of the sets of Li-Yorke pairs for some chaotic dynamical systems including $A$-coupled expanding systems. We prove that Li-Yorke pairs of $A$-coupled-expanding system under some conditions have full Hausdorff  dimension in the invariant set. Moreover we give a generalization of the result of [5] which is on the  Hausdorff dimension of Li-Yorke pairs of dynamical systems topologically conjugate to the full shift and have a self-similar invariant set, to the case of the dynamical systems topologically semi-conjugate to some kinds of subshifts. Further more we count Hausdorff dimension of ``chaotic invariant set" for some kinds of A-coupled-expanding maps.

\end{abstract}

\vskip0.6cm\noindent
{\bf Keywords}:Li-Yorke chaos, coupled-expanding map,  Hausdorff dimension

%%%%%%%%%%%%%%%%% 1. Introduction
\section{Introduction} 
 The term ``chaos" was introduced firstly into mathematics in the paper of Li-Yorke [3] that is based on the existence of Li-Yorke pairs.  Li-Yorke pairs are the pairs of points that approach each other for some sequence of moments in the time evolution and that remain separated for other sequences of moments. 
  In [5]  was discussed on the Hausdorff dimension of the set of Li-Yorke pairs for some simple classical chaotic dynamical systems.  It showed that, if the dynamical system in its invariant set is topologically conjugate to the full shift symbolic dynamical system and its invariant set is self-similar  or a product of self-similar sets, then its Li-Yorke pairs have full Hausdorff dimension in the invariant set. This result can be applied to simple classical models of ``chaotic" dynamics like the tent map, the Bakers transformation, Smales horseshoe, and solenoid-like systems (see [5]) since these kinds of systems have invariant sets of self-similar  or a product of self-similar sets in which the systems are topologically conjugate to full shift. To prove that Li-Yorke pairs have full dimension for more general hyperbolic systems
could be a task for further research[5], which is one topic we are going to study in this paper.

On the other hand,  the coupled-expanding and $A$-coupled-expanding with a transitive matrix $A$ has been recognized as one of the important criteria of chaos, see e.g.[8, 9, 10, 11, 12, 13]. There were obtained some results on the chaotic properties of $A$-coupled-expanding map  including chaos in the sense of  Li-Yorke and   Devaney,  and its topological entropy. However we can not find yet any result on dimensional-theoretical research for it.
In due consideration of the fact that Hausdorff dimension is a measure of the ``size", more exactly the ``thickness" of a set, we can say that the bigger  Hausdorff dimension of a so-called ``chaotic sets" as like the set of  Li-Yorke pairs, the more  the chaotic behave of the system occurs. 
Therefore it  seems natural and meaningful  to investigate on the Hausdorff dimension of the set of Li-Yorke pairs for the $A$-coupled-expanding  map.

 In this paper we prove that Li-Yorke pairs of $A$-coupled-expanding system under some conditions have full Hausdorff  dimension on the invariant set. And we generalize the result of [5] which is on the  Hausdorff dimension of Li-Yorke pairs of dynamical systems topologically conjugate to the full shift, to the case of dynamical systems topologically semi-conjugate to some kinds of subshifts. Moreover we get a result on Hausdorff dimension of ``chaotic invariant set" for some kinds of A-coupled-expanding systems.
In this paper we investigate these topics by using the concept of symbolic geometric construction which was defined  in [6].

The rest of this paper is organized as follows. In Section 2 some basic concepts which will be used later are introduced. In section 3 we prove that  for some  $A$-coupled-expanding systems under some conditions, their  invariant Cantor sets  in which they are topologically conjugate to the subshift $\sigma _A$  become  limit sets of a symbolic geometric construction concerning the basic sets of the systems (Theorem 3.1). This means that Li-Yorke pairs of  $A$-coupled-expanding system under some conditions have full Hausdorff dimension on the invariant set(Remark 3.1). We also prove a theorem on the Hausdorff dimension of the set of Li-Yorke pairs for a strictly coupled-expanding map (Thorem 3.2).  In section 4, we generalize the result of [8] on the Hausdorff dimension of Li-Yorke pairs of dynamical systems topologically  conjugate to full shift and having a self-similar invariant set,  to the case of dynamical system topologically semi-conjugate to some kinds of subshifts (Theorem 4.1). And by using Theorem 4.1, we obtain a result on Hausdorff dimension of the set of Li-Yorke pairs of a strictly $A$-coupled-expanding system for some special matrices $A$ under some conditions(Theorem 4.2).

%%% 2. Preliminaries
\section{Preliminaries}
%%%% 2.1 Basic Definitions

In this section we  introduce main concepts  which are used in this paper. All the others concerned with topological and symbolic
dynamics are refered to the notations in [1].

\vskip0.5cm
\noindent{\bf Definition 2.1}{[6]} Let $(X, T)$ be a dynamical system on metric space $X$. A pair of points $(x, y)\in X^2$ is said to be  \emph {Li-Yorke pai}r for $T$ if 

\begin{displaymath}
\liminf\limits_{n\rightarrow \infty} d(T^n x, T^n y)=0 \quad \textrm {and} \quad  \limsup\limits_{n\rightarrow \infty} d(T^n x, T^n y)>0.
\end{displaymath}

Given an invariant set $\Lambda \subset X$, i.e. $f(\Lambda)=\Lambda$, we define the set of Li-Yorke pairs in $\Lambda$ for $T$ by 
\begin{displaymath}
LY_T(\Lambda)=\{(x,y) \in \Lambda ^2 | (x,y) \textrm{ is a Li-Yorke pair}\}.
\end{displaymath}

We say that Li-Yorke pairs in $\Lambda$ have full Hausdorff dimension for $T$ if the Hausdorff dimension of $LY_T(\Lambda)$ coincides with the Hausdorff dimension of $\Lambda ^2$, i.e.
\begin{displaymath}
\dim_H (LY_T(\Lambda))=\dim_H(\Lambda^2).
\end{displaymath}

\vskip0.5cm
\noindent{\bf Definition 2.2}{[11]}  Let $(X,d)$ be a metric space and $f:D\subset X \rightarrow X$. Let $A=((A)_{ij})_{m\times m}$ be a $m\times m$ transitive matrix, where $m\ge 2$.  
If there exist $m$  nonempty subsets $V_i(1\leq i\leq m)$ of $D$ with pairwise disjoint interiors such that 
\begin{displaymath}
f(V_i)\supset\bigcup\limits_{\substack{j\\(A)_{ij}=1}}V_j, \quad 1\leq i\leq m
\end{displaymath}
then $f$ is said to be \emph {$A$-coupled-expanding map}  (or system $(X, f)$ is said to be  \emph { $A$-coupled-expanding system}) in $V_i, 1\leq i\leq m$.\\

Further, the map $f$ is said to be \emph {strictly $A$-coupled-expanding map} (or system $(X, f)$ is said to be \emph {$A$-coupled-expanding system}) in $V_i, 1\leq i\leq m\quad$ if $\quad d(V_i, V_j)>0$ for all $1\leq i\neq j\leq m$.
In the special case that all entries of $A$ are 1s, the (strictly) $A$-coupled-expanding map is said to be  (strictly) coupled-expanding map.

\vskip0.5cm
\noindent{\bf Definition 2.3}[6]  Let $\Sigma _m^+=\{(i_1 \ldots i_n \ldots) : i_j =1, \ldots , m\}$.
 Let $Q\subset \Sigma _m^+$ be an invariant set of one sided full shift $\sigma$  on $\Sigma _m^+$  and $\{\Delta_{i_1 \ldots i_n}\}, (i_j =1, \ldots , m)$ be a family of closed sets in ${\mathbb R}^d$ called as \emph {basic sets} where $\{i_1 \ldots i_n\}$ is an admissible $n$-tuple with respect to $Q$, i.e., there exists $(j_1 \ldots j_n \ldots )\in Q$ such that $j_1=i_1, \ldots, j_n=i_n $ .

If for any admissible tuple  $(i_1 \ldots i_n i_{n+1})$ with repect to $Q$  it  is satisfied that \\

(i) $ \Delta _{i_1 \ldots i_n i_{n+1}} \subset  \Delta _{i_1 \ldots i_n }$,\\

(ii) $\Delta _{i_1 \ldots i_n }\cap \Delta _{j_1 \ldots j_n }=\emptyset$, $(i_1 \ldots i_n )\ne (j_1 \ldots j_n )$ \\

and \\
\begin{center}
$\lim \limits_{n\rightarrow \infty} \max \limits_{\begin{subarray}{l}(i_1 \ldots i_n ) \\ {admissible}\end {subarray}}  diam (\Delta_{i_1 \ldots i_n})=0$,\\
\end{center}
then we call the pair $(Q, \{\Delta _{i_1 \ldots i_n}\})$ \emph{symbolic geometric construction}. And the set

\begin{displaymath}
 F=\bigcap \limits_{n=1}^\infty \bigcup \limits_{\begin{subarray}{l}(i_1 \ldots i_n ) \\ {admissible}\end {subarray}} 
\Delta_{i_1 \ldots i_n} 
\end{displaymath}
is said to be \emph {limit set} of it.

This limit set is  Cantor-like set, i.e., it is perfect, nowhere dense and totally disconnected set.
The geometric construction $(Q, \{\Delta _{i_1 \ldots i_n}\})$ is said to be a \emph{simple geometric construction} if  $Q=\Sigma _m^+$,  and it is said to be \emph {Markov geometric construction} if   $Q=\Sigma _m^+(A)$ for a transtive matrix $A$(see [11] for  $\Sigma _m^+(A)$).

\vskip0.5cm

\noindent{\bf Definition 2.4}{[2]} Let  $S_i:{\mathbb R}^d \rightarrow {\mathbb R}^d (1\le i \le m)$ be a contraction map with contract ratio coefficient $c_i$, i.e., $|S_i(x)-S_i(y)|=c_i|x-y|,\quad 0 <c_i<1.$

If $K=\bigcup \limits_{i=1}^N S_i(K)$, then $K$ is said to be \emph{invariant set with respect to } $S=\{S_1,\ldots, S_N\}$.

If $K$ is invariant set with respect to $S=\{S_1,\ldots, S_N\}$ and for any $\alpha$ with  $\sum \limits_{i=1}^N c_i^\alpha =1 $, satisfies that 
\begin{displaymath}
H^\alpha (K)>0,\quad  H^\alpha (K_i \cap K_j)=0, \quad  (i\ne j)
\end{displaymath}
then $K$ is said to be  \emph {self-similar set}  where $H^\alpha$ is $\alpha$ dimension Hausdorff measure and $K_i=S_i(K).$

%%%%%%%%%%%%%%%%%%   3. A-coupled-expanding map and symbolic geometric construction.
\section{$A$-coupled-expanding map with symbolic geometric construction and Hausdorff dimension of the set of Li-Yorke pairs for it.}

We now prove that for some  $A$-coupled-expanding maps under some conditions, their invariant Cantor sets in which they are topologically conjugate to the subshift $\sigma _A$  refer to the limit sets of symbolic geometric construction concerning the basic sets of the maps, so that their Li-Yorke pairs have full Hausdorff dimension. And we consider Hausdorff dimension of the set of Li-Yorke pairs for a strictly coupled-expanding map.
%%%%%%%%%%%%%%%  Lemma 3.1

\begin{lemma}
Let $(X,d)$ be a metric space, $f:X\subset D \rightarrow X$ be a map and  $A$ be an $m \times m (m \ge 2)$ irreducible transitive matrix such that

\begin{displaymath}
 \quad \exists i_0 (1 \le i_0 \le m), \quad  \Sigma _{j=1}^m (A)_{i_0j}\ge 2. 
\end{displaymath}

Assume that there are $m$ compact subsets  $V_i(1\le i \le m) $  of $X$ with pairwise disjoint interiors such that $f$ is continuous and satisfies followings:\\

i) $f$ is a strictly $A$-coupled-expanding map  on the $V_i(1\le i \le m)$,\\

ii) there exist some constants $\lambda _1, \ldots, \lambda_m (\lambda_i>1)$ such that 

\begin{displaymath}
d(f(x), f(y))=\lambda_i d(x,y), \quad x, y \in V_i \quad  (1\le i \le m).
\end{displaymath}

Then $f$ has an invariant  Cantor set $V\subset \bigcup \limits_{i=1}^m V_i$ such that  $f$ in $V$ is topologically conjugate to the subshift $\sigma _A$.

\end {lemma}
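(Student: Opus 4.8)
The plan is to construct the invariant set $V$ as the set of points whose forward orbits always remain in the union $\bigcup_{i=1}^m V_i$ while respecting the transition rule encoded by $A$, and then to exhibit a topological conjugacy to $\sigma_A$ via an itinerary (coding) map. Concretely, for an admissible sequence $(i_1 i_2 \ldots) \in \Sigma_m^+(A)$, I would define the cylinder-type sets
\begin{displaymath}
V_{i_1 \ldots i_n} = \bigcap_{k=0}^{n-1} f^{-k}(V_{i_{k+1}}),
\end{displaymath}
i.e. the points $x$ whose first $n$ iterates satisfy $f^k(x) \in V_{i_{k+1}}$. The first key step is to verify that each $V_{i_1 \ldots i_n}$ is nonempty and compact whenever $(i_1 \ldots i_n)$ is $A$-admissible: nonemptiness follows from the covering condition $f(V_i) \supset \bigcup_{(A)_{ij}=1} V_j$ together with continuity and compactness (pulling back the required target set through $f$ restricted to $V_{i_1}$), and the nesting $V_{i_1 \ldots i_n i_{n+1}} \subset V_{i_1 \ldots i_n}$ is immediate from the definition.

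The second key step is to use hypothesis (ii), the uniform expansion $d(f(x),f(y)) = \lambda_i d(x,y)$ on $V_i$, to show that diameters shrink geometrically. For $x,y \in V_{i_1 \ldots i_n}$ all the iterates $f^k(x), f^k(y)$ lie in the same $V_{i_{k+1}}$, so applying the expansion estimate telescopically gives
\begin{displaymath}
d(f^{n-1}(x), f^{n-1}(y)) = \Big(\prod_{k=0}^{n-2} \lambda_{i_{k+1}}\Big)\, d(x,y) \ge \lambda^{\,n-1} d(x,y),
\end{displaymath}
where $\lambda = \min_i \lambda_i > 1$. Since $f^{n-1}(x), f^{n-1}(y)$ both lie in the compact set $V_{i_n}$, which has finite diameter, this forces $\mathrm{diam}(V_{i_1 \ldots i_n}) \le C \lambda^{-(n-1)} \to 0$. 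Hence $\bigcap_n V_{i_1 \ldots i_n}$ is a single point $\pi(i_1 i_2 \ldots)$, and I would set $V = \pi(\Sigma_m^+(A))$. The strict coupled-expanding hypothesis $d(V_i,V_j)>0$ for $i\ne j$ (condition (i)) guarantees the disjointness needed so that distinct admissible sequences code distinct itineraries, making $\pi$ injective.

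The final step is to check that $\pi : \Sigma_m^+(A) \to V$ is a homeomorphism conjugating $\sigma_A$ to $f$. The intertwining relation $f \circ \pi = \pi \circ \sigma_A$ is a direct consequence of the definition of the cylinders (shifting the symbol sequence corresponds to applying $f$). Continuity of $\pi$ follows from the diameter bound $\mathrm{diam}(V_{i_1 \ldots i_n}) \to 0$ uniformly over admissible $n$-tuples, which controls how close two points are once their symbolic sequences agree on a long initial block; injectivity comes from the positive separation $d(V_i,V_j)>0$; and surjectivity onto $V$ holds by construction. Since $\Sigma_m^+(A)$ is compact and $V$ is metric (Hausdorff), a continuous bijection is automatically a homeomorphism, so $\pi$ is a conjugacy. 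That $V$ is a Cantor set then follows because it is the limit set of a symbolic geometric construction (the $V_{i_1 \ldots i_n}$ satisfy exactly conditions (i)–(iii) of Definition 2.3), and the irreducibility of $A$ together with the existence of a row with two admissible successors ($\sum_j (A)_{i_0 j} \ge 2$) ensures $\Sigma_m^+(A)$ is a perfect, totally disconnected, infinite set, so the same holds for its homeomorphic image $V$.

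I expect the main obstacle to be the nonemptiness of the pullback cylinders $V_{i_1 \ldots i_n}$, since the hypotheses only provide the covering $f(V_i) \supset \bigcup_{(A)_{ij}=1} V_j$ rather than a clean bijective branch structure; one must argue carefully—using compactness, continuity, and a backward induction peeling off one symbol at a time—that the required preimages exist inside $V_{i_1}$ and can be chosen to produce a genuine point of the intersection. The expansion estimate and the conjugacy verification are comparatively routine once the coding sets are shown to be well-behaved.
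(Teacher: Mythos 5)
Your argument is correct, but it takes a different route from the paper: the paper's entire proof consists of observing that hypothesis (ii) implies the uniform expansion bound $d(f(x),f(y))\ge \lambda_0\, d(x,y)$ with $\lambda_0=\min_i\lambda_i>1$, and then invoking Theorem 5.2 of [11] (Shi--Ju--Chen), which is a black-box criterion guaranteeing that a strictly $A$-coupled-expanding map with such an expansion property on compact sets has an invariant Cantor set on which it is conjugate to $\sigma_A$. What you have done is essentially reconstruct the content of that cited theorem from scratch: the cylinder sets $V_{i_1\ldots i_n}=\bigcap_{k=0}^{n-1}f^{-k}(V_{i_{k+1}})$, their nonemptiness and compactness via backward induction on the covering condition $f(V_i)\supset\bigcup_{(A)_{ij}=1}V_j$, the geometric decay of diameters from the telescoped expansion estimate, the itinerary map $\pi$, injectivity from $d(V_i,V_j)>0$, and the compactness argument upgrading the continuous bijection to a homeomorphism. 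All of these steps are sound (including the point you flag as the main obstacle, which your peeling-off-one-symbol induction handles correctly), and the Cantor-set property does follow from irreducibility plus the row-sum condition as you say. The trade-off is clear: the paper's proof is two lines but opaque without the reference, while yours is self-contained and also makes visible that only the inequality $d(f(x),f(y))\ge\lambda_0\,d(x,y)$, not the exact equality in (ii), is used --- which is precisely the reduction the paper performs before citing [11]. One small item worth making explicit in your write-up is the invariance $f(V)=V$, which follows from $f\circ\pi=\pi\circ\sigma_A$ together with surjectivity of $\sigma_A$ on $\Sigma_m^+(A)$ (guaranteed by irreducibility).
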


\vskip0.5cm
\begin{proof} Put $\lambda _0=\min \limits_{1 \le i\le m}\lambda _i$, then for any $x,y \in V_i$,
 
\begin{displaymath}
d(f(x), f(y))\ge \lambda _0d(x,y).
\end{displaymath}

Thus the desired result follows immediately from  the  theorem 5.2 of [11]. 
\end{proof}

Next theorem shows that for some $A$-coupled maps, their invariant sets in which they are topologically conjugate to the subshift $\sigma _A$ have symbolic geometric construction.

%%%%%%%%%%%%%%%%%Theorem 3.1

\vskip0.5cm

\begin{theorem} Let $D\subset {\mathbb R}^d$ be a bounded and closed set and suppose that  $f:D \rightarrow {\mathbb R}^d$ satisfies all assumptions of above Lemma. Put $Q= \Sigma_m^+ (A)$ and for admissible sequence $(a_0 a_1 \ldots a_n)$ put 

\begin{displaymath}
 \Delta _{a_0 a_1 \ldots a_n}=\bigcap _{j=0}^n f^{-j}(V_{a_j}).
\end{displaymath}

Then a symbolic geometric construction with the family of basic sets $\{ \Delta _{a_0 a_1 \ldots a_n}\}$ is constructed and the limit set of this construction is coincide with the set  $V$ in the Lemma 3.1, i.e., 

\begin{displaymath}
\bigcap \limits_{n=0}^\infty \bigcup \limits_{\begin{subarray}{l}(a_0 a_1 \ldots a_n ) \\ {admissible}\end {subarray}} \Delta _{a_0 a_1 \ldots a_n} =V.
\end{displaymath}
In other words $f$ has an invariant Cantor set $V$, which becomes a limit set of Markov geometric construction $(\Sigma_m^+ , \{\Delta _{a_0 a_1 \ldots a_n}\})$, in which $f$ is topologically conjugate to the subshift $\sigma _A$.

\end{theorem}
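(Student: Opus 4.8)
The plan is to verify directly that the family $\{\Delta_{a_0 a_1 \ldots a_n}\}$ satisfies the three requirements of Definition 2.3, and then to identify the resulting limit set with the itinerary description of the set $V$ furnished by Lemma 3.1. First I would record that each basic set is closed: since $f$ is continuous on $D$ and each $V_{a_j}$ is compact, the preimage $f^{-j}(V_{a_j})$ is closed, and $\Delta_{a_0 \ldots a_n}$ is a finite intersection of such sets, hence a closed subset of $\mathbb{R}^d$ (in fact compact, as $D$ is bounded and closed). The nesting property (i) is then immediate from the definition, because $\Delta_{a_0 \ldots a_n a_{n+1}} = \Delta_{a_0 \ldots a_n} \cap f^{-(n+1)}(V_{a_{n+1}}) \subset \Delta_{a_0 \ldots a_n}$.

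For the disjointness property (ii), I would take two distinct admissible $n$-tuples $(a_0 \ldots a_n) \ne (b_0 \ldots b_n)$ and let $k$ be the least index with $a_k \ne b_k$. Since $f$ is strictly $A$-coupled-expanding, $d(V_{a_k}, V_{b_k}) > 0$, so in particular $V_{a_k} \cap V_{b_k} = \emptyset$; as $\Delta_{a_0 \ldots a_n} \subset f^{-k}(V_{a_k})$ and $\Delta_{b_0 \ldots b_n} \subset f^{-k}(V_{b_k})$, the two basic sets are disjoint.

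The substantive point is property (iii), the diameter estimate. For $x, y \in \Delta_{a_0 \ldots a_n}$ one has $f^j(x), f^j(y) \in V_{a_j}$ for $0 \le j \le n$, so condition ii) of the Lemma gives $d(f^{j+1}(x), f^{j+1}(y)) = \lambda_{a_j}\, d(f^j(x), f^j(y))$, and iterating, with $\lambda_0 = \min_i \lambda_i > 1$, yields $d(x,y) \le \lambda_0^{-n}\, d(f^n(x), f^n(y)) \le \lambda_0^{-n} \max_i \operatorname{diam}(V_i)$. The right-hand bound is independent of the admissible tuple, so $\max_{\text{adm}} \operatorname{diam}(\Delta_{a_0 \ldots a_n}) \le \lambda_0^{-n} \max_i \operatorname{diam}(V_i) \to 0$, establishing that $(Q, \{\Delta\})$ is a (Markov) symbolic geometric construction.

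Finally I would prove $F = V$ by a double inclusion resting on the fact that, the sets $V_i$ being pairwise disjoint by strict coupling, every point whose forward orbit lies in $\bigcup V_i$ has a unique itinerary. If $x \in V$, its itinerary $(a_0 a_1 \ldots) \in \Sigma_m^+(A)$ satisfies $f^j(x) \in V_{a_j}$ for all $j$, so every prefix is admissible and $x \in \bigcap_{j=0}^n f^{-j}(V_{a_j}) = \Delta_{a_0 \ldots a_n}$ for each $n$, whence $x \in F$. Conversely, by the nesting and disjointness already proved, any $x \in F$ determines at each level a unique admissible tuple containing it; these tuples cohere into a sequence $a \in \Sigma_m^+(A)$, and $x \in \bigcap_n \Delta_{a_0 \ldots a_n} = \bigcap_{j \ge 0} f^{-j}(V_{a_j})$, so $x$ has admissible itinerary and lies in $V$. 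The hard part is this last identification: it forces me to invoke the explicit itinerary description of $V$ coming from Theorem 5.2 of [11] (via Lemma 3.1) and to confirm that strict coupling makes the itinerary map, hence the conjugacy with $\sigma_A$, well defined; by comparison the verification of (i)--(iii) is essentially mechanical.
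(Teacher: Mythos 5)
Your proposal is correct and follows essentially the same route as the paper: verify properties (i)--(iii) of Definition 2.3 directly (nesting, disjointness of same-level basic sets via strict coupling, and the diameter estimate obtained by iterating the isometric expansion $d(f(x),f(y))=\lambda_{a_j}d(x,y)$), and then identify the limit set with $V$ through the itinerary description of $V$ supplied by [11]. The only differences are cosmetic: you spell out the disjointness check and phrase the final identification as a double inclusion, whereas the paper leaves (ii) implicit and invokes the identity $V=\bigcup_{\alpha}\bigcap_{n\ge 0}f^{-n}(V_{a_n})$ from Theorem 4.1 of [11] directly.
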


\vskip0.5cm

\begin{proof} It is easy to see that for any $n\in \mathbb N$, $\Delta _{a_0 a_1 \ldots a_n a_{n+1}} \subset \Delta _{a_0 a_1 \ldots a_n}$ and 
\begin{displaymath}
f(\Delta _{a_0 a_1 \ldots a_n}) \subset \bigcap \limits_{j=0}^n f^{1-j}(V_{a_j}) \subset \bigcap \limits_{j=1}^n f^{1-j}(V_{a_j})=\Delta _{ a_1 \ldots a_n}.
\end{displaymath}
It follows  inductively that 
\begin{displaymath}
f^n(\Delta _{a_0 a_1 \ldots a_n}) \subset \Delta _{a_n}=V_{a_n}.
\end{displaymath}
On the other hand for any $x, y \in \Delta _{a_0 a_1 \ldots a_n} \subset V_{a_0}$, it follows that
\begin{displaymath}
d(x,y)=\frac{1}{\lambda_{a_0}} d(f(x), f(y))
\end{displaymath}
and 
\begin{displaymath}
f(x), f(y)\in \Delta_{a_1 \ldots a_n} \subset V_{a_1}, 
\end{displaymath}
 which means that

\begin{displaymath}
d(f(x), f(y))=\frac{1}{\lambda _{a_1}}d(f^2(x),(f^2(y)).
\end{displaymath}
Proceeding with these processes, we have

\begin{displaymath}
d(x,y)=\frac{1}{\lambda_{a_0}\lambda_{a_1} \ldots \lambda_{a_n}} d(f^n(x), f^n(y)) \le \frac {1}{(\min \lambda _i)^n} \textrm{diam} V_{a_n} \le \frac {1}{(\min \lambda_i)^n}\textrm{diam}D.
\end{displaymath}
It means that
\begin{displaymath}
\max \textrm{diam}\Delta_{a_0 a_1 \ldots a_n}\le \frac {1}{(\min \lambda_i)^n}\textrm{diam}D,
\end{displaymath}
therefore

\begin{displaymath}
\lim_{n\rightarrow \infty}\max\textrm{diam}\Delta_{a_0 a_1 \ldots a_n}=0.
\end{displaymath}

On the other hand since there is an invariant subset $V \subset \bigcup\limits_ {i=1}^m V_i$ in which  $f$  is topologically conjugate to the subshift $\sigma _A$ by Lemma 3.1, from Theory 4.1 in [11], $\bigcap \limits_{n=0}^\infty f^{-n}(V_{a_n})$ is singleton for any $\alpha =(a_0a_1\ldots a_n\ldots) \in \Sigma_m^+(A)$ and 

\begin{displaymath}
V=\bigcup \limits_{\alpha \in \Sigma_m^+(A)}\bigcap \limits_{n=0}^\infty f^{-n}(V_{a_i}).
\end{displaymath}
Obviously 

\begin{displaymath}
\bigcup \limits_{\alpha \in \Sigma_m^+(A)}\bigcap \limits_{n=0}^\infty f^{-n}(V_{a_i})=\bigcap \limits_{n=0}^\infty \bigcup \limits_{\begin{subarray}{l}(i_1 \ldots i_n ) \\ {admissible}\end {subarray}} \Delta _{a_0 a_1 \ldots a_n}.
\end{displaymath}
The proof is thus complete.
\end{proof}

%%%%%%%%%%%%%Remark 3.1

{\sc Remark 3.1} From Theorem 5.1 in [5] and above Theorem 3.1, it follows that  Li-Yorks pairs of coupled-expanding map satisfying the conditions of Lemma 3.1 have  full Hausdorff dimension in the invariant set $V$.

\vskip0.5cm
Next Theorem concerns on  the Hausdorff dimension of this invariant set $V$ , so-called ``chaotic set", for some kinds of coupled-expanding dynamical systems.

%%%%%%%%%%%Theorem 3.2

\begin{theorem} Let $D\subset {\mathbb R}^n$ be a closed bounded set and $f:D\rightarrow {\mathbb R}^n$ be a strictly coupled-expanding map in m disjoint compact subsets $V_i \subset D^ d (1\le i\le m)$ and continuous in $\bigcup \limits_{i=1}^m V_i$.  If there are some constants $\lambda _1, \ldots , \lambda_m (\lambda_i >1)$ such that 

\begin{displaymath}
d(f(x), f(y))=\lambda_i d(x,y),\space  \quad x,y \in V_i  \quad (1 \le i\le m),
\end{displaymath}
then the Hausdorff dimension of the limit set $V$ for the symbolic geometric construction $(Q=\Sigma _m^+,  \quad  \Delta _{a_0 a_1 \ldots a_n}=\bigcap _{j=0}^n f^{-j}(V_{a_j}))$ is the solution of the equation

\begin{displaymath}
(\frac {1}{\lambda_1})^p+\ldots +(\frac {1}{\lambda_m})^p=1.
\end{displaymath}
And we have
\begin{displaymath}
\dim_H LY_f(V)=2p_0
\end{displaymath}
where  $p_0$ is the solution of this equation.
\end{theorem}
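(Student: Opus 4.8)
The plan is to exploit the fact that a \emph{coupled-expanding} map corresponds to the all-ones matrix $A$, so that $Q=\Sigma_m^+$ and the construction of Theorem 3.1 is a \emph{simple (Moran) geometric construction} rather than a general Markov one. First I would check that the hypotheses of Lemma 3.1 are met: the all-ones $m\times m$ matrix ($m\ge 2$) is irreducible and transitive and every row sum equals $m\ge 2$, so Theorem 3.1 applies and produces the invariant Cantor set $V=\bigcap_{n\ge 0}\bigcup_{(a_0\ldots a_n)}\Delta_{a_0\ldots a_n}$, on which $f$ is conjugate to the full shift $\sigma$.

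The heart of the dimension computation is to recognize $V$ as the attractor of a genuine iterated function system of similarities. Since hypothesis (ii) gives $d(f(x),f(y))=\lambda_i\,d(x,y)$ on $V_i$, the branch $f|_{V_i}$ is a similarity of ratio $\lambda_i$, and the conjugacy to the full shift shows $f$ maps $V\cap V_i$ bijectively onto $V$; hence its inverse $g_i:=(f|_{V_i})^{-1}$ is a contracting similarity of ratio $1/\lambda_i$ and $V=\bigcup_{i=1}^m g_i(V)$. The strict coupling hypothesis $d(V_i,V_j)>0$ $(i\ne j)$ furnishes the (strong) separation / open set condition. Applying Moran's theorem for self-similar sets [2,6], I obtain $\dim_H V=p_0$, where $p_0$ is the unique root of $\sum_{i=1}^m\lambda_i^{-p}=1$; uniqueness follows because $p\mapsto\sum_i\lambda_i^{-p}$ is continuous and strictly decreasing from $m>1$ at $p=0$ to $0$ at infinity (each $\lambda_i>1$). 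Concretely, the upper bound comes from covering $V$ by the level-$n$ sets, for which $\operatorname{diam}\Delta_{a_0\ldots a_n}\le(\lambda_{a_0}\cdots\lambda_{a_{n-1}})^{-1}\operatorname{diam}D$ (as computed in the proof of Theorem 3.1), so that $\sum_{(a_0\ldots a_n)}(\operatorname{diam}\Delta_{a_0\ldots a_n})^{p_0}\le m(\operatorname{diam}D)^{p_0}$ stays bounded; the lower bound comes from the Bernoulli measure $\mu$ with weights $\lambda_i^{-p_0}$ (a probability vector by the Moran equation) together with the mass distribution principle.

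For the Li-Yorke part I would argue in two steps. Because $V$ is self-similar with the open set condition it is Ahlfors $p_0$-regular, so its box and Hausdorff dimensions agree; consequently the product $V^2=V\times V$ is Ahlfors $2p_0$-regular and $\dim_H(V^2)=2\dim_H V=2p_0$ (alternatively, the general bound $\dim_H(A\times B)\ge\dim_H A+\dim_H B$ combined with the box-dimension upper bound $\dim_H(A\times B)\le\dim_H A+\dim_B B$ collapses to equality here since $\dim_B V=\dim_H V$). Since $A$ is the all-ones matrix, $\sigma_A$ is precisely the full shift, so $f|_V$ is topologically conjugate to the full shift and $V$ is self-similar; thus Remark 3.1, i.e.\ Theorem 5.1 of [5] applied through Theorem 3.1, gives that $LY_f(V)$ has full Hausdorff dimension in $V^2$, namely $\dim_H LY_f(V)=\dim_H(V^2)$. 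Combining the two displays yields $\dim_H LY_f(V)=2p_0$, as claimed.

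The step I expect to be the main obstacle is the lower bound $\dim_H V\ge p_0$. Controlling $\mu(B(x,r))$ requires turning the qualitative separation $d(V_i,V_j)>0$ into a uniform estimate on how many level-$n$ basic sets a ball of radius comparable to their diameter can meet; this is exactly where one needs a two-sided diameter estimate $c\,(\lambda_{a_0}\cdots\lambda_{a_{n-1}})^{-1}\le\operatorname{diam}(\Delta_{a_0\ldots a_n}\cap V)\le C\,(\lambda_{a_0}\cdots\lambda_{a_{n-1}})^{-1}$, the lower half of which relies on the exact scaling $f(\Delta_{a_0\ldots a_n}\cap V)=\Delta_{a_1\ldots a_n}\cap V$ coming from the conjugacy. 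Once this uniform finiteness is in hand the mass distribution principle closes the argument, and the remainder of the proof is routine bookkeeping.
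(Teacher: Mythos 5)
Your proposal is correct and follows essentially the same route as the paper: both identify $V$ as the self-similar attractor of the inverse branches $S_i=(f|_{V_i})^{-1}|_V$ (contractions of ratio $1/\lambda_i$ with disjoint images by strict coupling), invoke Moran's theorem for $\dim_H V=p_0$, and then apply Theorem 5.1 of [5] through the full-shift conjugacy to conclude $\dim_H LY_f(V)=\dim_H(V\times V)=2p_0$. The extra detail you supply on the mass distribution principle and Ahlfors regularity is material the paper delegates to the cited references rather than a different argument.
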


\vskip0.5cm

\begin{proof}
For any $i\in {1,2, \ldots , m}$, put $U_i=\{\alpha=(a_0a_1\ldots) \in \Sigma_m^+:a_0=i\}$.
From the assumption for $f$,  for any $\alpha=(a_0a_1\ldots) \in \Sigma_m^+$ the set $\bigcap _{n=0}^\infty  f^{-n}(V_{a_n})$ is a  singleton.
Now define a map $g:\Sigma_m^+ \rightarrow V$ as follows:

\begin{displaymath}
\alpha=(a_oa_1\ldots ) \mapsto \bigcap _{n=0}^\infty  f^{-n}(V_{a_n}).
\end{displaymath}

Then, from Theorem 4.1 in [11] $g$ is homeomorphism and we have $f\circ g=g \circ \sigma$.
Obviously we have $g(U_i)\subset V_i$ and it follows that 

\begin{displaymath}
f(g(U_i))=g(\sigma (U_i))=g(\Sigma_m^+)=V.
\end{displaymath}
This means that $V$ can be formed by expanding of $\lambda _i$ times of $g(U_i)$. Therefore for any $i(1\le i\le m)$ putting $S_i=(f|_{V_i})^{-1}\big|_V$, then $S_i$ is a contraction map with contract  ratio coefficient $\frac{1}{\lambda_i}$. In fact, for any $x,y \in V$ there are $t, s \in V_i$ such that $f(t)=x, f(s)=y$ since $f$ is expanding in $V_i (1\le i \le m)$. Therefore,

\begin{displaymath}
 \begin{array}{ll}
S_i(x)=(f|_{V_i})^{-1}\circ f(t)=t,\\
S_i(y)=(f|_{V_i})^{-1}\circ f(s)=s.
\end{array}
\end{displaymath}
Thus we have 
\begin{displaymath}
d(S_i(x), S_i(y))=d(t,s)=\frac {1}{\lambda_i}d(x,y).
\end{displaymath}

On the other hand if $i\neq j$, then $S_i(V)\cap S_j(V)=\emptyset$ since $S_i(V)=g(U_i) \subset V_i$. Hence $V$ is a self-similar set. Note that

\begin{displaymath}
V=\bigcup \limits_{i=1}^m g(U_i)=\bigcup \limits_{i=1}^m S_i(V).
\end{displaymath}
Therefore, by the Theorem 2 in [4] $\dim _H (V)$ is equal to the solution of the equation

\begin{displaymath}
(\frac {1}{\lambda_1})^p+\ldots +(\frac {1}{\lambda_m})^p=1.
\end{displaymath}

Then, since $f$ satisfies in $V$ the condition of Theorem 5.1 in [5], it follows that  

\begin{displaymath} 
\dim_H(LY_f(V))=\dim_H(V\times V)=2\dim_HV=2p_0
\end{displaymath}
where $p_0$ is a solution of above mentioned equation. Thus the proof is complete.
\end{proof}

%%%%%%%%%%%%%%%4. Li-Yorke pairs of full dimension for systems topologically semi-conjugate to a subshift
\section{Li-Yorke pairs of full dimension for systems topologically semi-conjugate to a subshift}

In this section we  generalize the result of [5] on the Hausdorff dimension of Li-Yorke pairs of dynamical systems which are topologically conjugate to a full shift and have a self-similar invariant set,  to the case of dynamical system topologically semi-conjugate to some kinds of subshifts. Moreover we consider Hausdorff dimension of ``chaotic invariant set"  for the systems.

We  consider a kind of matrices as following:
\begin{displaymath}
\label{1}
A=\left(\begin{array}{cccccc}
		&& 1			\\
		&& \vdots 			\\
	 1	& \cdots 	&0	& \cdots 	& 1\\
		&& \vdots 			\\
		&& 1			\\
%	0          & \cdots &       0    &   1        &  0         &\cdots  &     0      \\
%	\vdots  &\ddots &\vdots   & \vdots  &\vdots  & \ddots & \vdots   \\
%	1          & \cdots &    1       &       1     &    1      &\cdots  &    1         \\
%	0         &  \cdots &    0       &       1     &    0      &\cdots  &    0        \\
%          \vdots &  \ddots & \vdots & \vdots   & \vdots &\ddots & \vdots   \\
%          0         &  \cdots  &   0       &       1     &    0      & \cdots &    0   
\end{array}\right)
\end{displaymath}
where all the entries of  $i$ th row and  $i$ th column are equal to 1s except that  $(A)_{ii}=0$, while other entries may be arbitrary.

We are going to prove that the result of [5] above mentioned holds as well for the map topologically semi-conjugate to the subshift $\sigma_A$ for this kind of matrices $A$.

First, we consider the matrix $A$  which has  $i$ th row and  $i$ th column consist of 1s while other entries are all 0. Especially, to simplify our consideration we are going to fix the matrix $A$ as following:

\begin{displaymath}
A=\left(\begin{array}{cccc}
	0     &   1   &  \cdots &   1        \\
	1     &   0    & \cdots &    0    \\
	\vdots & \vdots & \ddots & \vdots \\
	1      &  0     & \cdots &    0
\end{array}\right) ,
\end{displaymath}
the other cases can be treated similar to this.
We can see that $\Sigma _m^+(A)$ and $\Sigma _m^+$ are homeomorphic. In fact, for any $s\in \Sigma _m^+(A)$, assume $\bar{s}$ is the sequence obtained from $s$ by eliminating one digit 1  which lies behind of  elements different from 1 in $s$, and define a map $\Phi: \Sigma _m^+(A)\rightarrow \Sigma_m^+$,  by $\Phi(s)=\bar{s}$, then we can see easily that $\Phi$ is homeomorphism.

Now let $\Lambda \subset {\mathbb R}^d$ be a self-similar set constructed by a family of contracting maps $S=\{S_1, \ldots, S_m\}$ satisfying :\\

1) the contract ratio coefficient of $S_i$ is $c_i$,\\

2) there exists a compact set $K\subset {\mathbb R}^d$ such that $S_i(K)\subset K$  for any $i (1\le i\le m)$ and $S_i(K)\cap S_j(K)=\emptyset$ if $i\neq j$,\\

3) $\Lambda =\bigcup\limits_{i=1}^m S_i(\Lambda)$.

And define a map $\pi: \Sigma _m^+\rightarrow \Lambda$ by 

 \begin{displaymath}
\pi(\alpha)=\lim\limits_{n\rightarrow \infty}S_{a_n}\circ \ldots \circ S_{a_0}(K),\quad  \alpha =(a_0a_1\ldots)\in \Sigma_m^+.
\end{displaymath}

It is easy to see that $\pi$ is homeomorphism and  a map $\pi_A:\Sigma _m^+(A)\rightarrow \Lambda$ defined by $\pi_A=\pi \circ \Phi$ is obviously  homeomorphism.

Then we can get following Lemma.

 %%%%%%%%%%%%%%%%%%%%Lemma  4.1
\begin{lemma} Let $f:{\mathbb R}^d \rightarrow {\mathbb R}^d$ be a map with a compact invariant set $\Lambda$. If $(\Lambda, f)$ topologically conjugate to one-sided subshift $(\Sigma _m^+(A), \sigma_A)$ with $f\circ \pi_A=\pi_A \circ \sigma_A$,  and $\Lambda$ is self-similar, then the Li-Yorke pairs  for $f$ have full Hausdorff dimension, i.e., 
 \begin{displaymath}
 \dim_H(LY_f(\Lambda))=\dim_H \Lambda\times \Lambda
\end{displaymath}
where the first row and the first column of $A$ consist of 1s and other entries are all 0 .
\end{lemma}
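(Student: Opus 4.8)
The plan is to compare the given system with the full-shift model already handled in [5], transporting everything through $\Phi$ and $\pi_A=\pi\circ\Phi$. First I would settle the easy inequality. Since $\Lambda=\bigcup_{i=1}^m S_i(\Lambda)$ with $S_i(K)\cap S_j(K)=\emptyset$, the set $\Lambda$ satisfies the strong separation condition and is Ahlfors regular of exponent $s=\dim_H\Lambda$; hence $\dim_H(\Lambda\times\Lambda)=2s$, and because $LY_f(\Lambda)\subset\Lambda\times\Lambda$ we get $\dim_H(LY_f(\Lambda))\le 2s$ for free. The work is the reverse bound, and the idea is to exhibit two dynamics on the \emph{same} set $\Lambda$: the given $f=\pi_A\circ\sigma_A\circ\pi_A^{-1}$ (conjugate to $\sigma_A$), and the full-shift model $\tilde f=\pi\circ\sigma\circ\pi^{-1}$ (conjugate to $\sigma$), to which Theorem 5.1 in [5] directly applies because $\Lambda$ is self-similar.

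Next I would pass to the symbolic picture. As $\pi_A$ (resp. $\pi$) is a homeomorphism conjugating $f$ (resp. $\tilde f$) to the shift, and both $\pi_A,\pi$ together with their inverses are uniformly continuous, a pair $(\pi_A\alpha,\pi_A\beta)$ lies in $LY_f(\Lambda)$ exactly when $\alpha,\beta\in\Sigma_m^+(A)$ are \emph{proximal but not asymptotic} for $\sigma_A$: they share arbitrarily long common blocks at matching positions, yet differ at infinitely many positions. The analogous statement holds for $\tilde f$ with $\sigma$ on $\Sigma_m^+$. Thus the whole problem is to compare these two proximality structures through $\Phi$.

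The heart of the argument is an inclusion $LY_{\tilde f}(\Lambda)\subseteq LY_f(\Lambda)$. For the chosen matrix $A$ the admissible sequences are precisely those in which the symbol $1$ and the symbols of $\{2,\ldots,m\}$ strictly alternate, so the $1$'s deleted by $\Phi$ occupy a fixed, synchronized set of positions. Given a full-shift Li-Yorke pair $(a,b)$, I would apply $\Phi^{-1}$ with a single fixed phase to obtain co-phased sequences $\alpha,\beta\in\Sigma_m^+(A)$; inserting the forced $1$'s into co-phased sequences changes common-block lengths only by a bounded reparametrization and never creates new agreements or disagreements, so the inserted pair is again proximal and non-asymptotic, i.e. $(\alpha,\beta)$ is $\sigma_A$-Li-Yorke. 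Since $\pi_A\alpha=\pi(\Phi\alpha)=\pi a$ and likewise $\pi_A\beta=\pi b$, the point $(\pi a,\pi b)\in LY_{\tilde f}(\Lambda)$ indeed lies in $LY_f(\Lambda)$. By Theorem 5.1 in [5], $\dim_H\big(LY_{\tilde f}(\Lambda)\big)=\dim_H(\Lambda\times\Lambda)=2s$, whence $\dim_H\big(LY_f(\Lambda)\big)\ge 2s$; combined with the upper bound this gives the claimed equality.

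I expect the third paragraph to be the main obstacle. The delicate points are: verifying that a proximal pair for $\sigma_A$ must be co-phased (otherwise one sequence carries a $1$ wherever the other carries a non-$1$ and the pair is automatically distal), so that $\Phi^{-1}$ may be applied with a consistent phase; controlling the distortion of block lengths under insertion of the forced $1$'s so that ``arbitrarily long common blocks'' and ``differing infinitely often'' genuinely survive; and, conceptually, reconciling the fact that $f$ and $\tilde f$ are different maps on $\Lambda$ that nonetheless share a full-dimensional part of their Li-Yorke sets. The clean geometric input making all of this dimension-preserving is the Ahlfors regularity coming from strong separation, which underlies both the upper bound and the applicability of [5].
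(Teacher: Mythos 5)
Your first two paragraphs are fine: the upper bound via $LY_f(\Lambda)\subset\Lambda\times\Lambda$, and the translation of the problem into a statement about proximal but non-asymptotic pairs for $\sigma_A$, are both correct and are implicitly present in the paper. The gap is exactly where you predicted it, in the third paragraph, and it is not a technicality that can be patched: the inclusion $LY_{\tilde f}(\Lambda)\subseteq LY_f(\Lambda)$ is false. The map $\Phi^{-1}$ inserts a symbol $1$ after every non-$1$ entry, so the positions of the insertions are dictated by the sequence itself; there is no ``single fixed phase'' available to choose. If $(a,b)$ is a $\sigma$-Li-Yorke pair whose length-$n$ prefixes contain different numbers of non-$1$ symbols, then the common blocks of $a$ and $b$ are copied into $\Phi^{-1}a$ and $\Phi^{-1}b$ at different offsets; and since in $\Sigma_m^+(A)$ the symbol $1$ and the non-$1$ symbols strictly alternate, an odd offset forces $\Phi^{-1}a$ and $\Phi^{-1}b$ to disagree at every coordinate from some point on, so the lifted pair is distal for $\sigma_A$ even though $(a,b)$ is Li-Yorke for $\sigma$. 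Worse, the natural repair --- restricting to pairs that remain co-phased forever, i.e.\ $a_n=1\Leftrightarrow b_n=1$ for all $n$ --- cuts the Hausdorff dimension strictly below $2\dim_H\Lambda$, because per coordinate one discards the combinations $(1,j)$ and $(j,1)$ with $j\neq 1$; so that restricted set cannot supply the lower bound either.

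The paper avoids this by not reducing to the full-shift theorem at all. For each fixed $s\in\Sigma_m^+(A)$ it builds by hand a Cantor set $\Sigma_{\mathcal N}^A(s)$ of partners $t$ that literally copy $s$ outside a sparse, explicitly controlled set of positions, with extra $1$'s inserted around each forced disagreement so that admissibility and synchronization are preserved by construction. It then transports a Bernoulli measure onto the fiber $\Lambda_{\mathcal N}(s)=\pi_A(\Sigma_{\mathcal N}^A(s))$ via $pr$ and $\pi_A$, proves the local dimension estimate $\liminf_{\rho\to0}\log\mu(B_\rho(x))/\log\rho\ge D$ using the sparsity hypothesis $\lim_{M\to\infty}(M+6)^2/\sum_{n=0}^{M-1}N_n=0$, and only afterwards applies a fiber-dimension theorem to $\{(x,y): y\in\Lambda_{\mathcal N}(\pi_A^{-1}x)\}$ to obtain dimension $2D$. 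Producing that full-dimension family of synchronized partners is the actual content of the proof, and it is precisely what your reduction defers to the ``main obstacle'' without resolving.
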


\begin{proof}

Let $s\in \Sigma _m^+(A)$ and ${\cal N} =(N_n)$ be a sequence of natural numbers. Consider the set 
 \begin{displaymath} \begin{array}{l}
\Sigma _{\cal N}^A(S):=\{t\in \Sigma _m^+(A)\big | t_k=s_k, k\in \{u_i, u_i+i\};\\ \quad  t_{u_i+i+1}=1,  t_{u_i+i+2}=(1+s_{u_i+i+2})(\mod m),   t_{u_i+i+3}=1, t_{u_{i+1}-1}=1, i=0, 1, \ldots\},
\end{array}
\end{displaymath}
where $u_0=0$, $u_1=N_0+5$  and $u_i$ is given by the recursion $ u_{i+1}=u_i+N_i+i+6$.

Then an element $t\in \Sigma _{\cal N}^A(S)$ has the form 
 \begin{displaymath}
t=s_0 1\tilde{t}_2 1\underbrace{ \cdots }_{N_0}1s_{u_1}s_{u_1+1}1\tilde{t}_{u_1+3}1\underbrace{ \cdots }_{N_1} 1s_{u_2}s_{u_2+1}s_{u_2+2}1\tilde{t}_{u_2+4}1 \underbrace{ \cdots }_{N_2}1 \cdots ,
\end{displaymath}
where 
 \begin{displaymath}
\begin{array}{ll}
\tilde{t}_2=(1+s_2)\mod m,\\
\tilde{t}_{u_1+3}=(1+s_{u_1+3})\mod m,\\
\tilde{t}_{u_2+4}=(1+s_{u_2+4})\mod m ,\\
\ldots \quad .
\end{array}
\end{displaymath}

Then a pair $(s,t) \in \Sigma _m^+\times \Sigma _m^+(A), t\in \Sigma _{\cal N}^A(s)$,  is a Li-Yorke pair for $\sigma_A$. In fact it holds that

 \begin{displaymath}
\lim\limits_{i\rightarrow \infty}d(\sigma_A^{u_i}(s), \sigma_A^{u_i}(t))\le \lim\limits_{i\rightarrow \infty} 2^{-i}=0,
\end{displaymath}
 \begin{displaymath}
\lim\limits_{i\rightarrow \infty}d(\sigma_A^{u_i+i+1}(s), \sigma_A^{u_i+i+1}(t))\ge \frac{1}{2}>0.
\end{displaymath}

Now define the map $pr: \Sigma _m^+(A)\rightarrow \Sigma _{\cal N}^+(A)$ by 

 \begin{displaymath}
pr(t)=s_0 1\tilde{t}_21 t_0\ldots t_{N_0-1} 1s_{u_1}s_{u_1+1}1\tilde{t}_{u_1+2}1t_{N_0}\ldots t_{N_0+N_1-1}1\ldots \in  \Sigma _{\cal N}^+(A) 
\end{displaymath}
for $ t=(t_0t_1\ldots )\in \Sigma _m^+(A)$. We can see that the map $pr$ is a continuous  injection.

Let  

 \begin{displaymath}
\bar{\Sigma}=pr(\Sigma_m^+(A))\subset \Sigma _{\cal N}^A(s),
\end{displaymath}
 \begin{displaymath}
\Lambda_{\cal{N}}(s)=\pi_A(\Sigma _{\cal N}^A(s)),
\end{displaymath}
 \begin{displaymath}
\tilde{\Lambda}=\pi_A(\bar{\Sigma}).
\end{displaymath}
Then $\tilde{\Lambda}\subset \Lambda_{\cal{N}}(s)$.

Now we prove that

 \begin{displaymath}
\dim_H \Lambda_{\cal{N}}(s)=\dim_H \Lambda=D 
 \end{displaymath}
where  $D$ is a solution of the equation 
 \begin{displaymath}
c_1^D+\ldots +c_m^D=1
 \end{displaymath}
 for a sequence $(N_n)$ satisfying 
 \begin{displaymath}
\lim\limits_{M\rightarrow \infty} \frac{(M+6)^2}{\sum\limits_{n=0}^{M-1}N_n}=0
\end{displaymath}
(for example $N_n=n^2$). 

It is clear that $\dim_H\Lambda =D$ from the Theorem 2 in [10]. Since $\Lambda_{\cal{N}}(s) \subset \Lambda$, this yields  $\dim_H \Lambda_{\cal{N}}(s) \le D$.

For the opposite inequality it is sufficient to prove that $\dim _H(\tilde{\Lambda}) \ge D$ since $\Lambda_{\cal {N}} (s )\supset \tilde{\Lambda}$.
Now let $\nu$ be a Bernoulli measure on $\Sigma _m^+$ corresponding to the probability vector $(c_1^D, \ldots, c_m^D)$, 
and define 

 \begin{displaymath}
\mu=\nu\circ \Phi \circ {pr}^{-1} \circ {\pi_A}^{-1}.
 \end{displaymath}
Then $\mu$ becomes a probability measure on the $\tilde{\Lambda}$.

If we prove that 

 \begin{displaymath}
\liminf\limits_{\rho \rightarrow 0}\frac{\log \mu(B_\rho (x))}{\log \rho}\ge D
 \end{displaymath}
for any $x\in \tilde{\Lambda}$, then we shall  have $\dim _H(\tilde {\Lambda})\ge D$ using the Theorem 6.6.3 in [7].

By bijectivity of the map $pr$ on $\bar{\Sigma}$, for any  $x\in \tilde{\Lambda}$ there is a unique sequence $\alpha =(a_0a_1\ldots) \in \Sigma _m^+ $ such that $\pi_A(pr(\Phi^{-1}(\alpha)))=x$. Since $\{\pi_A \circ pr \circ \Phi^{-1}([a_0, \ldots, a_k])\}_{k=0}^\infty$ is a  contracting family  of compact subsets containing $x$, of which diameter goes  to 0, we have 

 \begin{displaymath}
\{x\}=\bigcap \limits_{k=0}^\infty \pi_A \circ pr \circ \Phi^{-1}([a_0, \ldots, a_k]),
\end{displaymath}
where 

 \begin{displaymath}
[a_0, \ldots, a_k]=\{t=(t_i)\in \Sigma_m^+|\quad t_i=a_i \quad  \textrm{for} \quad   i=1, \ldots, k \} .
\end{displaymath}
And it is clear that $ pr \circ \Phi^{-1}([a_0, \ldots, a_k])$ is also cylinder set in $\bar
{\Sigma}$.

For a cylinder $[b_0, \ldots, b_k]$, denote $c([b_0, \ldots, b_k])$ as follows:

\begin{displaymath}
c([b_0, \ldots, b_k])=c'_{b_i}\ldots c'_{b_k},
\end{displaymath}
\begin{displaymath}
c'_i=\left\{ \begin{array}{ll} \frac{c_i}{c_1} & i \neq 1,\\ c_1 & i=1.\end{array} \right.
\end{displaymath}

Then  $c([b_0, \ldots, b_k])$ is actually a product of $c_i$s because of the property of $A$. Put 

\begin{displaymath}
d=\min \limits_{i \neq j} \textrm{dist}(S_i(K),(S_j(K) )
\end{displaymath}
where
\begin{displaymath}
\textrm{dist}(A,B)=\inf\{\textrm{dist}(x,y)|x\in A, y\in B \}.
\end{displaymath}

Then we can see easily that the sequence $\{d\cdot c(pr \circ \Phi^{-1}([a_0, \ldots, a_k]))\}_{k=1}^\infty$ converges to $0$ as $k\rightarrow \infty$.
Therefore, for any $\rho>0$ there is a $k=k(\rho)$ such that 
\begin{displaymath}
d\cdot c(pr \circ \Phi ^{-1}([a_0, \ldots , a_k]))\le \rho <d \cdot c(pr \circ \Phi ^{-1}([a_0, \ldots , a_{k-1}])).
\end{displaymath}

Assume that $(\bar {a_0}, \ldots,\bar {a_k} ) \neq (a_0, \ldots, a_k)$, i.e., there is an $l\in \{0, \ldots, k\}$ such that $\bar{a_i}=a_i (i=0, \ldots, l-1), \bar{a_l}\neq a_l$. Then we have 

\begin{displaymath}
\textrm{dist}\left(\pi_A \circ pr \circ \Phi^{-1}([a_0, \ldots, a_k]), \{\pi_A \circ pr \circ \Phi^{-1}([\bar{a_0}, \ldots, \bar{a_k}])\right) \ge
\end{displaymath}
\begin{displaymath}
 \ge \textrm{dist} \left(\pi_A \circ pr \circ \Phi^{-1}([a_0, \ldots, a_l]), \{\pi_A \circ pr \circ \Phi^{-1}([\bar{a_0}, \ldots, \bar{a_l}])\right).
\end{displaymath}

We can denote $pr \circ \Phi^{-1}([a_0, \ldots, a_{l-1}])$ by $ [u_0, \ldots, u_t]$ since it is a cylinder in $\bar{\Sigma}$.
And put 
\begin{displaymath}
S_{[i_1, \ldots, i_k]}=S_{i_k}\circ  \ldots \circ S_{i_1}.
\end{displaymath}

From the definition of $\pi_A$ we have 

\begin{displaymath}
\pi_A \circ pr \circ \Phi^{-1}([a_0, \ldots, a_{l-1}])\subset S_{\Phi([u_0, \ldots, u_t])}(K),
\end{displaymath}

\begin{displaymath}
\pi_A \circ pr \circ \Phi^{-1}([a_0, \ldots, a_l])\subset S_{\Phi([u_0, \ldots, u_t, a_l])}(K) \subset S_{\Phi([u_0, \ldots, u_t])}(K),
\end{displaymath}

\begin{displaymath}
\pi_A \circ pr \circ \Phi^{-1}([\bar{a_0}, \ldots, \bar{a_l}])\subset S_{\Phi([u_0, \ldots, u_t, \bar{a_l}])}(K) \subset S_{\Phi([u_0, \ldots, u_t])}(K).
\end{displaymath}

Therefore,

\begin{displaymath}
\textrm{dist}\left(\pi_A \circ pr \circ \Phi^{-1}([a_0, \ldots, a_l]), \{\pi_A \circ pr \circ \Phi^{-1}([\bar{a_0}, \ldots, \bar{a_l}])\right) \ge
\end{displaymath}

\begin{displaymath}
\ge \textrm{dist}\left(S_{\Phi([u_0, \ldots, u_t, a_l])}(K), S_{\Phi([u_0, \ldots, u_t, \bar{a_l}])}(K)\right)
\end{displaymath}

\begin{displaymath}
\ge d\cdot c([u_0, \ldots, u_t])=d\cdot c( pr \circ \Phi^{-1}([a_0, \ldots, a_{l-1}]))>\rho.
\end{displaymath}

This means that 
\begin{displaymath}
\textrm{dist}\left(\pi_A \circ pr \circ \Phi^{-1}([a_0, \ldots, a_k]), \pi_A \circ pr \circ \Phi^{-1}([\bar{a_0}, \ldots, \bar{a_k}])\right)>\rho,
\end{displaymath}
thus 
\begin{displaymath}
\tilde{\Lambda} \cap B_\rho (x)\subset \pi_A \circ pr \circ \Phi^{-1}([a_0, \ldots, a_k])
\end{displaymath}
and 

\begin{displaymath}
\mu (B_\rho (x))\le \mu(\pi_A \circ pr \circ \Phi^{-1}([a_0, \ldots, a_k]))=\nu ([a_0, \ldots, a_k])=(c_{a_0}\cdot \ldots \cdot c_{a_k})^D.
\end{displaymath}

On the other hand,
\begin{displaymath}
c( \Phi^{-1}([a_0, \ldots, a_k]))=c_{a_0}\cdot \ldots \cdot c_{a_k}.
\end{displaymath}
In fact, $\Phi ^{-1}(a_0a_1\ldots)$ is the sequence obtained by setting 1 behind of  each digit of $(a_0a_1\ldots)$ not being 1, and  denoting the digits not being 1 by $a_{n_1}, \ldots, a_{n_p}$ in order, we have

\begin{displaymath}
c( \Phi^{-1}([a_0, \ldots, a_k]))=c_{a_0}\cdot \ldots c_{a_{n_1-1}}\cdot \frac{c_{a_{n_1}}}{c_1}\cdot c_1 \cdot \ldots \cdot c_{a_{n_2-1}}\cdot \frac{c_{a_{n_2}}}{c_1}\cdot c_1 \cdot \ldots \cdot c_{a_{n_p-1}}\cdot \frac{c_{a_{n_p}}}{c_1}\cdot c_1\cdot \ldots \cdot c_{a_k}
\end{displaymath}

\begin{displaymath}
=c_{a_0}\cdot \ldots \cdot c_{a_k}.
\end{displaymath}
Therefore, 
\begin{displaymath}
\mu (B_\rho (x))\le (c( \Phi^{-1}([a_0, \ldots, a_k])))^D=
\end{displaymath}

\begin{displaymath}
=\frac{(c( \Phi^{-1}([a_0, \ldots, a_k])))^D}{(c(pr \circ  \Phi^{-1}([a_0, \ldots, a_k])))^D}\cdot(c(pr \circ  \Phi^{-1}([a_0, \ldots, a_k])))^D 
\end{displaymath}

\begin{displaymath}
\le \frac{(c( \Phi^{-1}([a_0, \ldots, a_k])))^D}{(c(pr \circ  \Phi^{-1}([a_0, \ldots, a_k])))^D} \cdot d^{-D}\cdot \rho^D.
\end{displaymath}
It follows that

\begin{displaymath}
\log \mu (B_\rho (x))\le D\big(\log \rho-\log d-\log \frac {c(pr \circ  \Phi^{-1}([a_0, \ldots, a_k]))}{c( \Phi^{-1}([a_0, \ldots, a_k]))}\big)
\end{displaymath}

\begin{displaymath}
\le D\big(\log \rho-\log d-\delta (k) \cdot \log \underline{c}\big),
\end{displaymath}
where $\underline{c}=\min \limits_i c_i$ and 

\begin{displaymath}
\delta (k)=\sharp (pr \circ  \Phi^{-1}([a_0, \ldots, a_k]))-\sharp ( \Phi^{-1}([a_0, \ldots, a_k])),
\end{displaymath}
here $\sharp$ denotes the length of the cylinder.
Then, dividing  both sides of  above inequality by $\log \rho$  we have 

\begin{displaymath}
\frac{\log \mu (B_\rho (x))}{\log \rho}\ge D+D\big(- \frac{\log d}{\log \rho}-\delta (k)\cdot \frac{\log \underline{c}}{\log \rho}\big) \ge
\end{displaymath}

\begin{displaymath}
\ge  D+D\big(- \frac{\log d}{\log \rho}-\frac{\delta (k)\cdot \log \underline{c}}{\log d+\log c(pr \circ \Phi^{-1}([a_0, \ldots, a_{k-1}]) )} \big)
\end{displaymath}

\begin{displaymath}
\ge D+D \big(-\frac{\log d}{\log \rho}-\frac{\delta (k)\cdot \log \underline{c}}{\log d+(k+\delta (k-1)) \cdot \log \overline{c}} \big)
\end{displaymath}
where $\overline{c}=\max \limits_i c_i$. 

Since $\lim \limits_{\rho \rightarrow 0} k(\rho)=\infty$, now we prove that $\lim \limits_{k\rightarrow \infty }\frac{\delta (k)}{k}=0$.
For $k\in \mathbb N$ there is a $M=M(k)$ such that 
\begin{displaymath}
\sum \limits_{n=0}^{M-1} N_n< 2(k+1) \le \sum \limits_{n=0}^M N_n
\end{displaymath}
where we note $\sharp ( \Phi^{-1}([a_0, \ldots, a_k])) $ is no more than $2(k+1)$.  From the definition of $\delta, \Sigma _{\cal N}^A(s)$ and $ pr$, we have 

\begin{displaymath}
\delta (k)<\sum \limits_{n=0}^M (n+6) <(M+6)^2 ,
\end{displaymath}

\begin{displaymath}
\frac{\delta (k)}{2(k+1)}<\frac{(M+6)^2}{\sum \limits_{n=0}^{M-1}N_n}\rightarrow 0 (k\rightarrow \infty).
\end{displaymath}
Therefore 

\begin{displaymath}
\frac{\delta (k)}{k}\rightarrow 0 (k\rightarrow \infty).
\end{displaymath}
Thus we have

\begin{displaymath}
\liminf \limits_{\rho \rightarrow 0} \frac {\log \mu (B_\rho (x))}{\log \rho}\ge D.
\end{displaymath}

Now put 
\begin{displaymath}
\Pi_{\cal N}=\{(s,t)| s\in \Sigma_m^+(A), t\in \Sigma _{\cal N}^A(s)\}.
\end{displaymath}
Since $(s,t) \in \Pi_{\cal N}$ is Li-Yorke pair for $\sigma_A$, we have 
\begin{displaymath}
\Pi_{\cal N} \subset \textrm{LY}_{\sigma_A}\big(\Sigma_m^+(A)\big).
\end{displaymath}

Let
\begin{displaymath}
S^A=\{(x,y)\in \Lambda \times \Lambda |x\in \Lambda, y\in \Lambda_{\cal N}(\pi_A^{-1}(x))\}=\pi _A(\Pi_{\cal N}),
\end{displaymath}
then we have

\begin{displaymath}
S^A\subset \textrm{LY}_f (\Lambda).
\end{displaymath}
In fact, if $(s,t)\in \textrm{LY}_{\sigma_A}\big(\Sigma_m^+(A)\big)$, then from its definition we have 
\begin{displaymath}
\liminf \limits_{n\rightarrow \infty}d(\sigma _A^n(s), \sigma _A^n(t))=0, \quad  \limsup \limits_{n\rightarrow \infty}d(\sigma _A^n(s), \sigma _A^n(t))>0.
\end{displaymath}
Since $\pi_A$ is continuous, it follows that
\begin{displaymath}
\liminf \limits_{n\rightarrow \infty}d(\pi_A(\sigma _A^n(s)),\pi_A( \sigma _A^n(t)))=0, \quad  \limsup \limits_{n\rightarrow \infty}d(\pi_A( \sigma _A^n(s), \pi_A( \sigma _A^n(t)))>0,
\end{displaymath}
and since $f\circ \pi_A=\pi_A \circ \sigma_A$, we get 
\begin{displaymath}
\liminf\limits_{n\rightarrow \infty}d(f^n(\pi_A(s)), f^n(\pi_A(t)))=0, \quad  \limsup\limits_{n\rightarrow \infty}d(f^n(\pi_A(s)), f^n(\pi_A(t)))>0.
\end{displaymath}
Hence,
\begin{displaymath}
(\pi_A(s), \pi_A(t)) \in \textrm{LY}_f(\Lambda).
\end{displaymath}
Now using $\Pi_{\cal N} \subset \textrm{LY}_{\sigma_A}\big(\Sigma_m^+(A)\big)$ we get 
\begin{displaymath}
S_A=\pi_A(\Pi_{\cal N})\subset  \textrm{LY}_f(\Lambda).
\end{displaymath}

On the other hand  the Theorem 4.1 in [8]  implies that 
\begin{displaymath}
\dim_HS^A=\dim_H \Lambda \times \Lambda ,
\end{displaymath}
and using $S^A\subset  \textrm{LY}_f(\Lambda)\subset \Lambda \times \Lambda$ we can get 
\begin{displaymath}
\dim_H \textrm{LY}_f(\Lambda)=\dim_H \Lambda \times \Lambda.
\end{displaymath}

\end{proof}

Next, Let $A$ be a transitive matrix such that all the entries of  $i$th row and $i$th column are 1s. As above, we assume that $i$ = 1 . Now, we will generalize definition of the map $\Phi: \Sigma _m^+(A)\rightarrow \Sigma_m^+$. For any $s\in \Sigma _m^+(A)$, assume $\bar{s}$ is the sequence obtained from $s$ by eliminating one digit  which lies behind of  elements different from 1 in $s$, and define a map $\Phi: \Sigma _m^+(A)\rightarrow \Sigma_m^+$,  by $\Phi(s)=\bar{s}$, then we can prove that $\Phi$ is continuous surjection. (Note that eliminating digit might not be 1.) \\
\indent Similarly to above consideration, we define a map $\pi_A:\Sigma _m^+(A)\rightarrow \Lambda$ as $\pi_A=\pi \circ \Phi$ (the map $\pi:\Sigma_m^+\rightarrow\Lambda$ is already defined as $\pi(\alpha)=\lim\limits_{n\rightarrow}S_{a_n}\circ \ldots \circ S_{a_0}(K),  \alpha =(a_0a_1\ldots)\in \Sigma_m^+$) and then $\pi_A$ is obviously continuous surjection. 

\vskip0.5cm

Now, by using Lemma 4.1 we can generalize Theorem 5.1 in [5] to the case of the map topologically semi-conjugate to some kinds of subshifts .

%%%%%%%%%%%%%%%  Theorem  4.1
\begin{theorem} Let  $A$ be a transitive matrix such that all the entries of  $i$th row and $i$th column are 1s for at least an $i$ ($1\le i \le m$). Let $f:{\mathbb R}^d \rightarrow {\mathbb R}^d$ be a map with the invariant set $\Lambda$ which is a self-similar compact set. If $(\Lambda, f)$ is topologically semi-conjugate to an one-sided subshift $(\Sigma _m^+(A), \sigma_A)$ with $f\circ \pi_A=\pi_A \circ \sigma_A$,  then the Li-Yorke pairs have full Hausdorff dimension for $f$, i.e., 
 \begin{displaymath}
 \dim_H(LY_f(\Lambda))=\dim_H \Lambda\times \Lambda
\end{displaymath}
\end{theorem}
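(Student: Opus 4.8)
The plan is to run the proof of Lemma 4.1 again, but now with the homeomorphisms $\Phi$ and $\pi_A=\pi\circ\Phi$ of the special case replaced by the continuous surjections $\Phi$ and $\pi_A=\pi\circ\Phi$ just constructed for the general matrix $A$ (after relabelling so that the all-ones row and column is the first, $i=1$). First I would exhibit, for each $s\in\Sigma_m^+(A)$ and each fast-growing gap sequence $\mathcal{N}=(N_n)$ satisfying $\lim_{M\to\infty}(M+6)^2/\sum_{n=0}^{M-1}N_n=0$ (e.g. $N_n=n^2$), a set $\Sigma_{\mathcal{N}}^A(s)\subset\Sigma_m^+(A)$ of admissible ``Li-Yorke partners'' of $s$: sequences agreeing with $s$ on blocks of length $N_i\to\infty$ but forced to disagree at periodically spaced marker positions by inserting a $1$, a shifted symbol $(1+s_{\bullet})\bmod m$, and another $1$. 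The all-ones first row and column of $A$ guarantee that these insertions keep the sequences admissible, so the $\liminf/\limsup$ estimates of Lemma 4.1 show every pair $(s,t)$ with $t\in\Sigma_{\mathcal{N}}^A(s)$ is a Li-Yorke pair for $\sigma_A$.

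Next I would reproduce the dimension computation. Introducing the continuous injection $pr:\Sigma_m^+(A)\to\Sigma_{\mathcal{N}}^A(s)$ and setting $\bar\Sigma=pr(\Sigma_m^+(A))$, $\Lambda_{\mathcal{N}}(s)=\pi_A(\Sigma_{\mathcal{N}}^A(s))$, $\tilde\Lambda=\pi_A(\bar\Sigma)$, the goal is $\dim_H\Lambda_{\mathcal{N}}(s)=\dim_H\Lambda=D$, where $D$ solves $c_1^D+\cdots+c_m^D=1$. The upper bound is immediate from $\Lambda_{\mathcal{N}}(s)\subset\Lambda$; for the lower bound I would push the Bernoulli measure $\nu$ on $\Sigma_m^+$ with weights $(c_1^D,\ldots,c_m^D)$ forward to $\mu=\nu\circ\Phi\circ pr^{-1}\circ\pi_A^{-1}$ on $\tilde\Lambda$ and prove the local estimate $\liminf_{\rho\to0}\log\mu(B_\rho(x))/\log\rho\ge D$, concluding via the mass-distribution principle (Theorem 6.6.3 in [7]). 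The ball-radius bookkeeping is controlled by the separation constant $d=\min_{i\neq j}\mathrm{dist}(S_i(K),S_j(K))>0$ together with the fact that the inserted markers lengthen cylinders only by $\delta(k)$ with $\delta(k)/k\to0$, which is exactly where the growth condition on $(N_n)$ enters.

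Finally I would set $\Pi_{\mathcal{N}}=\{(s,t): s\in\Sigma_m^+(A),\ t\in\Sigma_{\mathcal{N}}^A(s)\}\subset LY_{\sigma_A}(\Sigma_m^+(A))$ and $S^A=\pi_A(\Pi_{\mathcal{N}})$. Using $f^n\circ\pi_A=\pi_A\circ\sigma_A^n$ and the continuity of $\pi_A$, the $\liminf$ condition transfers verbatim; the $\limsup$ condition survives because at the marker coordinates the two shifted sequences have $\Phi$-images with distinct leading symbols, so $\pi$ sends them into distinct self-similar pieces $S_j(K)$ separated by $d>0$, preventing $\pi_A$ from collapsing the separation. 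Hence $S^A\subset LY_f(\Lambda)$, and Theorem 4.1 in [8] gives $\dim_H S^A=\dim_H\Lambda\times\Lambda$; since $S^A\subset LY_f(\Lambda)\subset\Lambda\times\Lambda$, the squeeze yields $\dim_H LY_f(\Lambda)=\dim_H\Lambda\times\Lambda$.

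The \emph{main obstacle}, and the only genuinely new point beyond Lemma 4.1, is that $\Phi$ and hence $\pi_A$ are now merely continuous surjections rather than homeomorphisms, so the digit eliminated ``behind an element $\neq 1$'' need no longer be a $1$ nor be uniquely recoverable. This forces care at two places: the pushforward $\mu$ must be shown to be a well-defined probability measure despite $\pi_A^{-1}$ and $\Phi$ being many-valued, which works because $pr$ is injective and $\{\pi_A\circ pr\circ\Phi^{-1}([a_0,\ldots,a_k])\}_k$ remains a nested contracting family separated by $d$; and the telescoping identity $c(\Phi^{-1}([a_0,\ldots,a_k]))=c_{a_0}\cdots c_{a_k}$ must be re-derived with a rescaling $c'_i$ adapted to the general admissibility pattern of $A$ rather than to the forced ``$\cdots a\,1\cdots$'' pattern of the special matrix. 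Once these two bookkeeping points are settled, the remainder of the argument of Lemma 4.1 carries over unchanged.
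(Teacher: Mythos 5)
Your plan --- rerunning the whole proof of Lemma 4.1 with the non-injective $\Phi$ and $\pi_A$ --- is not the paper's route, and as written it has a genuine gap at exactly the point you defer as ``bookkeeping''. The measure $\mu=\nu\circ\Phi\circ pr^{-1}\circ\pi_A^{-1}$ is only a measure if each map in the chain is applied injectively on the relevant domain: for a general $A$ the map $\Phi$ is many-to-one on $\Sigma_m^+(A)$, so $E\mapsto\nu(\Phi(E))$ is not additive and $\mu$ is not well defined without first cutting down to a set on which $\Phi$ is injective. Likewise, the telescoping identity $c(\Phi^{-1}([a_0,\ldots,a_k]))=c_{a_0}\cdots c_{a_k}$ relies on the fact that for the special matrix $A'$ the digit eliminated by $\Phi$ after every non-$1$ symbol is \emph{forced to be a} $1$, so that each non-$1$ symbol $i$ contributes $c'_i\cdot c'_1=(c_i/c_1)\cdot c_1=c_i$. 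For a general $A$ the eliminated digit is variable, no single rescaling $c'_i$ makes the product telescope, and $\Phi^{-1}$ of a cylinder is a union of many cylinders of different weights; this step does not ``carry over'' and there is no obvious repair inside your framework. (Your handling of the $\limsup$ transfer through the semi-conjugacy, via the separation $d>0$ of the pieces $S_j(K)$ and the fact that $\Phi$ preserves the leading symbol, is sound --- but it is not where the difficulty lies.)

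The paper's proof is a one-paragraph reduction that avoids all of this. Since the first row and first column of $A$ are all $1$s, the subshift $\Sigma_m^+(A')$ for the special matrix $A'$ (first row and column $1$s, all other entries $0$) sits inside $\Sigma_m^+(A)$ as a $\sigma_A$-invariant subset; on $\Sigma_m^+(A')$ the map $\Phi$ is already a homeomorphism onto $\Sigma_m^+$, hence $\pi_{A'}=\pi_A|_{\Sigma_m^+(A')}$ is a homeomorphism onto all of $\Lambda$, and the semi-conjugacy restricts to a genuine conjugacy $f\circ\pi_{A'}=\pi_{A'}\circ\sigma_{A'}$. Lemma 4.1 then applies verbatim and yields $\dim_H(LY_f(\Lambda))=\dim_H\Lambda\times\Lambda$. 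If you want to salvage your argument, the natural fix is to build $\Sigma_{\cal N}^A(s)$ and $\bar\Sigma$ entirely out of $A'$-admissible sequences --- but at that point you have rediscovered the paper's reduction, and it is cleaner to state it as such.
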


\begin{proof} Without losing generality, we assume that $i$ = 1. Put
\begin{displaymath}
A'=\left(\begin{array}{cccc}
	0     &   1   &  \cdots &   1        \\
	1     &   0    & \cdots &    0    \\
	\vdots & \vdots & \ddots & \vdots \\
	1      &  0     & \cdots &    0
\end{array}\right).
\end{displaymath}
Then $\Sigma_m^+(A')\subset\Sigma_m^+(A)$, and the restriction of $\Phi$ to $\Sigma_m^+(A')$ is homeomorphic. Thus
\begin{displaymath}
\pi_A|_{\Sigma_m^+(A')}=\pi \circ \Phi|_{\Sigma_m^+(A')}=\pi_{A'}
\end{displaymath} 
is also homemorphism from $\Sigma_m^+(A')$ to $\Lambda$. Also
\begin{displaymath}
f\circ \pi_{A'}=f\circ \pi_A|_{\Sigma_m^+(A')}=\pi_A \circ \sigma_A|_{\Sigma_m^+(A')}=\pi_{A'} \circ \sigma_{A'}
\end{displaymath} 
and therefore $(\Lambda, f)$ is topologically conjugate to  $(\Sigma _m^+(A'), \sigma_{A'})$, i.e,
\begin{displaymath}
f\circ \pi_{A'}=\pi_{A'} \circ \sigma_{A'}.
\end{displaymath} 

From Lemma 4.1, we have
\begin{displaymath}
 \dim_H(LY_f(\Lambda))=\dim_H \Lambda\times \Lambda.
\end{displaymath}
\end{proof}

\vskip0.5cm

Next theorem concerns on the Hausdorff dimension of ``chaotic invariant set"  for  $A$-coupled-expanding systems for special matrix $A$.

%%%%%%%%%%%%%%%  Theorem  4.2
\begin{theorem} Let  $A$ be $m\times m$ transitive matrix such that all the entries of  first row and first column are 1s  while other entries are all 0. Assume that there are $m$ disjoint compact subsets  $V_i(1\le i \le m) (m \ge 2)$  of $X$ such that $f$ satisfies the conditions in the Lemma 3.1, i.e., $f$ is continuous and satisfies followings:\\

i) $f$ is a strictly $A$-coupled-expanding map  on the $V_i(1\le i \le m)$,\\

ii) there exist some constants $\lambda _1, \ldots, \lambda_m (\lambda_i>1)$ such that 

\begin{displaymath}
d(f(x), f(y))=\lambda_i d(x,y),  \quad x, y \in V_i (1\le i \le m).
\end{displaymath}

Then the Hausdorff dimension of the Cantor invariant subset $V\subset \bigcup \limits_{i=1}^m V_i$ in which f is topologically conjugate to subshift $\sigma_A$ (see lemma 3.1), is the solution  of the equation 

\begin{displaymath}
(\frac{1}{\lambda_1})^p+(\frac{1}{\lambda_1 \lambda_2})^p+\ldots +(\frac{1}{\lambda_1 \lambda_m})^p=1.
\end{displaymath}
Moreover 

\begin{displaymath}
\dim_H\textrm{LY}_f(V)=2p_0,
\end{displaymath}
where   $p_0$ is the solution of this equation.
\end{theorem}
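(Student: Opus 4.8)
The plan is to reduce the statement to the self-similar dimension formula (Theorem 2 in [4], as in Theorem 3.2) together with Theorem 4.1, once the self-similar structure forced by $A$ is made explicit. First I would apply Lemma 3.1: $A$ is irreducible transitive with $\sum_j (A)_{1j}=m\ge 2$, and $f$ satisfies (i)--(ii), so there is an invariant Cantor set $V\subset\bigcup_{i=1}^m V_i$ and a homeomorphism $g:\Sigma_m^+(A)\to V$, $g(\alpha)=\bigcap_{n\ge 0}f^{-n}(V_{a_n})$, with $f\circ g=g\circ\sigma_A$. Exactly as in Theorem 3.2, I set $S_i=(f|_{V_i})^{-1}$ and note that on its domain inside $V$ each $S_i$ is a similarity of ratio $1/\lambda_i$; here $f(V_1)\supset\bigcup_j V_j\supset V$ so $S_1$ is defined on all of $V$, whereas for $i\ge 2$ the inclusion $f(V_i)\supset V_1$ only gives $S_i$ on $V\cap V_1$.

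The decisive step is to read off the self-similar decomposition dictated by the admissibility rules of $A$. Put $V^{(i)}:=g([i])\subset V_i$. From $g(\alpha)=S_{a_0}(g(\sigma_A\alpha))$ one gets $V^{(i)}=S_i\big(g(\sigma_A([i]))\big)$. Since the first row of $A$ is all $1$s, the symbol $1$ admits every symbol as successor, so $V^{(1)}=S_1(V)$; since for $i\ge 2$ only $(A)_{i1}=1$, the symbol $i$ forces $1$ as successor, so $V^{(i)}=S_i(g([1]))=(S_i\circ S_1)(V)$. Setting $T_1:=S_1$ and $T_i:=S_i\circ S_1$ $(2\le i\le m)$, these are contracting similarities of ratios $1/\lambda_1$ and $1/(\lambda_1\lambda_i)$ (the composite ratio is legitimate because $S_1(V)=V^{(1)}\subset V_1$ lies in the domain of $S_i$). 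Their images are $V^{(1)},\dots,V^{(m)}$, which lie in the pairwise disjoint compact sets $V_1,\dots,V_m$, so they are pairwise disjoint and the strong separation condition holds; moreover $V=\bigsqcup_{i=1}^m V^{(i)}=\bigcup_{k=1}^m T_k(V)$, exhibiting $V$ as the attractor of the IFS $\{T_1,\dots,T_m\}$.

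With separation established, Theorem 2 in [4] applies as in Theorem 3.2 and gives $\dim_H V=p_0$, where $p_0$ solves $\sum_k r_k^{\,p}=1$, that is
\[
\Big(\tfrac{1}{\lambda_1}\Big)^p+\Big(\tfrac{1}{\lambda_1\lambda_2}\Big)^p+\cdots+\Big(\tfrac{1}{\lambda_1\lambda_m}\Big)^p=1 ,
\]
the asserted equation. For the Li-Yorke part I would check the hypotheses of Theorem 4.1: the matrix $A$ has its first row and column equal to $1$s (so the condition holds with $i=1$), $f|_V$ is topologically conjugate---hence semi-conjugate---to $\sigma_A$ via $g$, and $V$ has just been shown self-similar (indeed the full-shift coding of $\{T_k\}$ is precisely the recoding $\Phi$, so $g=\pi_A=\pi\circ\Phi$). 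Theorem 4.1 then yields $\dim_H\mathrm{LY}_f(V)=\dim_H(V\times V)$, and since a self-similar set with the separation condition has equal Hausdorff and box dimensions, $\dim_H(V\times V)=2\dim_H V=2p_0$, completing the proof as in the final step of Theorem 3.2.

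The real obstacle is the second paragraph: translating the admissibility rule ``every $i\ge 2$ is followed by $1$'' into the grouping $T_i=S_i\circ S_1$, i.e. absorbing each forced transition $i\to1$ into a single composite similarity so that the branch ratios become $1/(\lambda_1\lambda_i)$ rather than $1/\lambda_i$. Once this structure is found, the similarity ratios, the separation from disjointness of the $V_i$, and the product-dimension identity are all routine. A minor technical point to keep in mind is that $S_i$ for $i\ge 2$ exists only on $V\cap V_1$, which is exactly why one always composes it after $S_1$ and never uses $S_i$ in isolation.
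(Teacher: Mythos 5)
Your proposal is correct and follows essentially the same route as the paper: decompose $V$ by first symbol into $g(U_i)$, observe that $\sigma_A(U_1)=\Sigma_m^+(A)$ while $\sigma_A(U_i)=U_1$ for $i\ge 2$, so that the branch contractions have ratios $1/\lambda_1$ and $1/(\lambda_1\lambda_i)$ (your $T_i=S_i\circ S_1$ is exactly the paper's $(f^2|_{V_i})^{-1}|_V$), then apply Moran's theorem and Theorem 4.1. Your explicit attention to the domain of $S_i$ for $i\ge 2$ is a welcome clarification but not a departure from the paper's argument.
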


\begin{proof} Put
\begin{displaymath}
U_i=\{\alpha \in \Sigma_m^+(A):a_0=i \}.
\end{displaymath}
Then we have 
\begin{displaymath}
\sigma_A(U_1)=\Sigma_m^+(A), \quad \sigma_A(U_i)=U_1  (2\le i\le m).
\end{displaymath}
Using the same way as in the proof of Theorem 3.2, there exists a homeomorphism $g:\Sigma_m^+(A) \rightarrow V$ such that 
\begin{displaymath}
f\circ g=g\circ \sigma_A, \quad g(U_i) \subset V_i (i=1, \ldots , m)
\end{displaymath}
and 
\begin{displaymath}
f(g(U_1))=g(\sigma_A(U_1))=V.
\end{displaymath}
Therefore we can see that $V$ is obtained by $ \lambda _1$ times expanding of $g(U_1)$.
Since 
\begin{displaymath}
f^2(g(U_i))=f(g(\sigma_A(U_i)))=f(g(U_1))=V \quad (2\le i\le m),
\end{displaymath}
we can also see that $V$ is obtained by $ \lambda _1  \lambda _i$ times expanding of $g(U_i)$.
This leads to the fact that by  putting

\begin{displaymath}
S_1=(f|_{V_1})^{-1}\big|_V, \quad S_i=(f^2|_{V_i})^{-1}\big|_V,
\end{displaymath}
$S_1, S_i$ are  contracting maps with the contract ratio coefficients $\frac {1}{\lambda _1}, \frac {1}{\lambda_1 \lambda_i}$ and $V$ is the self-similar set for $\{S_1, \ldots, S_m\}$.

Thus, from  Theorem 2 in [10], $\dim_H V$ is the solution of the equation 
\begin{displaymath}
(\frac{1}{\lambda_1})^p+(\frac{1}{\lambda_1 \lambda_2})^p+\ldots +(\frac{1}{\lambda_1 \lambda_m})^p=1.
\end{displaymath}

And by using the Theorem 4.1, we have
\begin{displaymath}
\dim_H \textrm{LY}_f(V)=\dim_H V\times V=2\dim_H V=2p_0.
\end{displaymath}

\end{proof}

%%%%%%%%%%%  5 . Conclusion
\section{Conclusion}

Through this work we have several interesting observations: ``Chaotic invariant set" for some kind of $A$-coupled-expanding maps refers to a limit set of symbolic geometric construction concerning the basic sets of them(in this paper, by ``chaotic invariant set" we mean the invariant Cantor set in which the map is topologically conjugate to the shift $\sigma$ or subshift  $\sigma_A$ since these shift and subshift actually are all chaotic in several senses) and Li-Yorke pairs of these kind of $A$-coupled-expanding maps have full Hausdorff dimension in the invariant set. And the result of [5] on the Hausdorff dimension of Li-Yorke pairs of maps topologically conjugate to a full shift and having a self-similar invariant set is generalized to the case of maps topologically semi-conjugate to some kinds of subshifts. Moreover, Hausdorff dimension of ``chaotic invariant set" for some kinds of $A$-coupled-expanding maps has been  counted.

%\vskip0.3cm 
%{\small{\bf Acknowledgements}\quad 
%}

\vskip0.8cm\noindent
{\bf References}
\small\vskip0.4cm
\begin{itemize}

\item[{[1]}]  Bernd,A., Bernd, K.,  On three definitions of chaos, Nonlinear Dyn. Syst. Theory, 2001,  1(1), 23-37
\item[{[2]}]  Hutchinson, J.,  Fractals and self-similarity, Indiana Univ. math. J. 1981, 30, 271-280
\item[{[3]}]   Li, T.,  Yorke, J., Period three implies chaos, Amer. Math. Monthly, 1975,  82(10): 985-992
\item[{[4]}]  Moran, P. Additive functions of intervals and Hausdorff dimension. Proceeding of the Cambridge Philosophical society, 1946, 42, 15-23
\item[{[5]}] Neuohauserer,J.,  Li-Yorke pairs of full Hausdorff dimension for some chaotic dynamical systems, Math. Bohem. 2010, 135, 3, 279-289
\item[{[6]}] Pesin,Y.,  Weiss, H., On the dimension of deterministic and random Cantor-like sets, Math. Res. Lett. 1994, 1:519-529
\item[{[7]}]  Przytycki, F., Urbanski, M., Conformal fractals- Ergodic Theory Method, 2009,

 www.math.unt.edu/~urbanki/pubook/pu0905517.pdf
\item[{[8]}] Ri, C., Ju, H., Wu, X., Entropy for A-coupled-expanding maps and chaos.2013,  arXiv:1309.
6769v2 [math.DS] 28 Sep 
\item[{[9]}]  Shi, Y., Yu, P., Study on chaos induced by turbulent maps in noncompact sets.
Chaos Solitons Fractals 2006, 28, 1165-1180 
\item[{[10]}]  Shi, Y., Yu, P., Chaos induced by regular snap-back repellers. J. Math. Anal.
Appl. 2008, 337, 1480-1494
\item[{[11]}]  Shi, Y., Ju, H., Chen, G., Coupled-expanding maps and one-sided symbolic
dynamical systems, Chaos Solitons Fractals 2009, 39(5), 2138-2149 
\item[{[12]}] Zhang, X., Shi, Y. : Coupled-expanding maps for irreducible transition matrices.
Int. J. Bifur. Chaos 2010, 20(11), 3769-3783 
\item[{[13]}] Zhang, X., Shi, Y., Chen, G., Some properties of coupled-expanding maps in compact sets.  Proceeding of the American Mathematical Society, 2013, 141(2), 585-595 

\end{itemize}

\end{document}